\documentclass{amsart} 
\address{Department of Mathematics, Institute of Science Tokyo, 2-12-1 Ookayama, Meguro-ku, Tokyo, 152-8551, Japan}
\email{tanaka.a.2255@m.isct.ac.jp}

\usepackage{amsmath,amssymb}
\usepackage{amsfonts}
\usepackage{graphicx} 

\usepackage{hyperref}
\usepackage{caption} 
\usepackage{mathtools}
\usepackage{enumitem}
\usepackage[margin=10pt]{caption} 

\usepackage[final]{pdfpages}
\usepackage{amsthm}
\usepackage{color}

\usepackage{tikz}  

\theoremstyle{plain}
\newtheorem{thm}{Theorem}[section]

\newtheorem{cor}[thm]{Corollary}

\newtheorem*{thm*}{Theorem}
\newtheorem*{cor*}{Corollary}

\theoremstyle{definition}

\newtheorem*{que*}{Question}
\newtheorem*{con*}{Conjecture}

\begin{document}

\title[Combinatorial extension of a construction of Lefschetz fibrations]{Combinatorial extension of a simple construction of Lefschetz fibrations}
\author{Atsushi Tanaka}
\date{}

\begin{abstract}
In a previous work, we introduced a simple and systematic method for constructing a positive allowable Lefschetz fibration (PALF) from a 2-handlebody decomposition of a given Stein surface. 
In this paper, we present a combinatorial extension of this construction, focusing on the flexibility of the regular fiber. 
By introducing variations in the isotopy of the 0-handle during the construction process, we obtain PALFs whose total spaces are diffeomorphic to the original Stein surface but which possess different regular fibers. 
As a primary application, we prove the existence of PALFs with genus $1$ regular fibers whose total spaces are diffeomorphic to the knot traces of Legendrian positive twist knots and positive torus knots $T_{2, 2n+1}$. 
Furthermore, we explicitly compare our PALF associated with the positive torus knot $T_{2, 2n+1}$ to the specific open book decomposition generated by Avdek's Algorithm 2, demonstrating that the regular fiber and monodromy of our construction coincide with the page and monodromy of the corresponding open book.
\end{abstract}

\maketitle

\section{Introduction}\label{sec:intro}

Loi and Piergallini \cite{MR1835390}, Akbulut and Ozbagci \cite{MR1825664}, and Akbulut and Arikan \cite{MR2972525} independently showed that every compact Stein surface (a Stein surface in short) admits a positive allowable Lefschetz fibration over the disk $D^2$ with bounded fibers (PALF in short), and they provided explicit constructions of such fibrations. 

In our previous work \cite{Tanaka2025construction}, we presented a simple and systematic method for constructing a PALF from a 2-handlebody decomposition of any given Stein surface. 
This method produces PALFs whose regular fibers have a small genus and provides an alternative constructive proof of the above result. 
In that paper, to ensure that all vertical segments lifted over 1-handles have northwest (NW) corners as their upper endpoints, we performed an SW stabilization at each NE corner. 
This specific approach allowed the regular fiber of the constructed PALF to be explicitly presented as a plumbing of Hopf bands.

In general, for a constructed PALF, the shape of the regular fiber can be flexibly deformed---while preserving the PALF structure---by an isotopy of the 0-handle, or by sliding the attaching spheres of the 1-handles along the boundary of the union of the 0- and 1-handles. 
Motivated by this flexibility, in this paper, we explore presentations of regular fibers that are well-suited for such deformations. 

By introducing variations in the isotopy of the 0-handle during the construction process, we provide a method to construct PALFs from a given knot in grid position such that they share the same diffeomorphism type, but possess different regular fibers (e.g., with different genera). 
As a corollary to Theorem 3.2 in \cite{Tanaka2025construction}, we show that this modified construction indeed yields a PALF diffeomorphic to the given Stein surface. 

As an application of this construction, we prove the existence of PALFs whose regular fibers have genus $1$ and whose total spaces are diffeomorphic to the knot traces of positive twist knots and positive torus knots $T_{2, 2n+1}$.

Furthermore, it is well known that the contact structure induced on the boundary of a Stein surface is supported by the open book decomposition induced by the PALF that the Stein surface admits (cf.\ \cite[Subsection~2.1]{MR3609904}). Comparing the open book induced by our method with those obtained via existing algorithms clarifies the geometric characteristics of our construction and provides insights for future research. In Section~\ref{sec:open_book}, as a first step, we explicitly compare our PALF associated with the positive torus knot $T_{2, 2n+1}$ to the specific open book generated by Avdek's Algorithm 2 \cite{MR3071137}. By applying geometric deformations to the regular fiber and elementary transformations to the monodromy, we verify that the two constructions exactly coincide.

This paper is organized as follows. 
In Section~\ref{sec:combinatorial}, we introduce the combinatorial extension of our construction method for 2-handlebodies both with and without 1-handles, demonstrating how variations in the isotopy of the 0-handle affect the regular fiber. In addition, we provide practical remarks on the construction process and establish a general principle regarding the vertical translation of a knot in grid position. 
In Section~\ref{sec:application}, we apply this method to prove the existence of genus $1$ regular fibers for specific knot traces, including positive twist knots and positive torus knots. 
 In Section~\ref{sec:open_book}, we compare our PALF associated with the positive torus knot $T_{2, 2n+1}$ to the specific open book generated by Avdek's Algorithm 2. 
Finally, in Section~\ref{sec:variation}, we explore further variations of our construction method such as altering the direction of operations or passing 1-handles behind the 0-handle.

\section*{Acknowledgements}
The author would like to express his sincere gratitude to Professor Hisaaki Endo for his invaluable guidance and constant encouragement to expand the scope of this research into adjacent rich areas such as 3-manifolds. The author is also grateful to Nobuo Iida for his helpful discussions during the early stages of this research. This work was supported by JST SPRING, Japan Grant Number JPMJSP2180.

\section{Modified construction of PALFs via 0-handle isotopies} \label{sec:combinatorial}

In this section, we refer to Sections 2 and 3 of \cite{Tanaka2025construction}.

\subsection{2-handlebodies without 1-handles}\label{subsec:construction1}

As a concrete example, as in \cite{Tanaka2025construction}, we consider the Stein surface $\Pi$ whose handle decomposition consists of a 0-handle and a single 2-handle attached along the Legendrian right-handed trefoil knot $T_{2,3}$ in $(S^3, \xi_{st})$, denoted by $\widetilde{C_0}$ (Figure~5(a) in \cite{Tanaka2025construction}). 

Figures~\ref{fig:X0201}(a)--(d) are reproduced from Figures~5(b), 6(b), 9(a), and 9(d) in \cite{Tanaka2025construction}. The knot $\widetilde{C_0'}$ in grid position is obtained from the front projection of the Legendrian knot $\widetilde{C_0}$, which serves as the attaching circle of the original Stein surface $\Pi$. To ensure that all vertical segments lifted over 1-handles have northwest (NW) corners as their upper endpoints, we perform an SW stabilization at each NE corner. By applying this operation to $\widetilde{C_0'}$, we obtain the knot $\widetilde{C_0''}$ in grid position. The guide line $B_0$ is a copy of $\widetilde{C_0''}$ drawn on the 0-handle $D^2$, which is represented as a gray square. By applying our construction method, we obtain the PALF $P$ shown in Figure~\ref{fig:X0201}(d).

The regular fiber of the PALF $P$ can be deformed into the one shown in Figure~\ref{fig:X0203} through the following isotopies. In Figure~\ref{fig:X0201}(d), Columns~1 and 2 correspond to vertical segments that originally had NW corners in Figure~\ref{fig:X0201}(a). These are deformed by sliding the attaching spheres of the 1-handles along the boundary of the 0-handle, as illustrated in Figure~\ref{fig:X0202}(a). Similarly, Columns~3 and 4 in Figure~\ref{fig:X0201}(d), which originally corresponded to vertical segments with NE corners in Figure~\ref{fig:X0201}(a), are deformed as shown in Figure~\ref{fig:X0202}(b). Consequently, the size of the grid diagram---which had increased due to the SW stabilizations at the NE corners---is restored to its original size by removing one row and one column at each of the two NE corners. Throughout this paper, we adopt the representation of 1-handles shown in Figure~\ref{fig:X0203}.

\begin{figure}[htbp]
\centering  
\begin{tikzpicture}
    \node[anchor=south west, inner sep=0] (image) at (0,0)  {\includegraphics[scale=0.7]{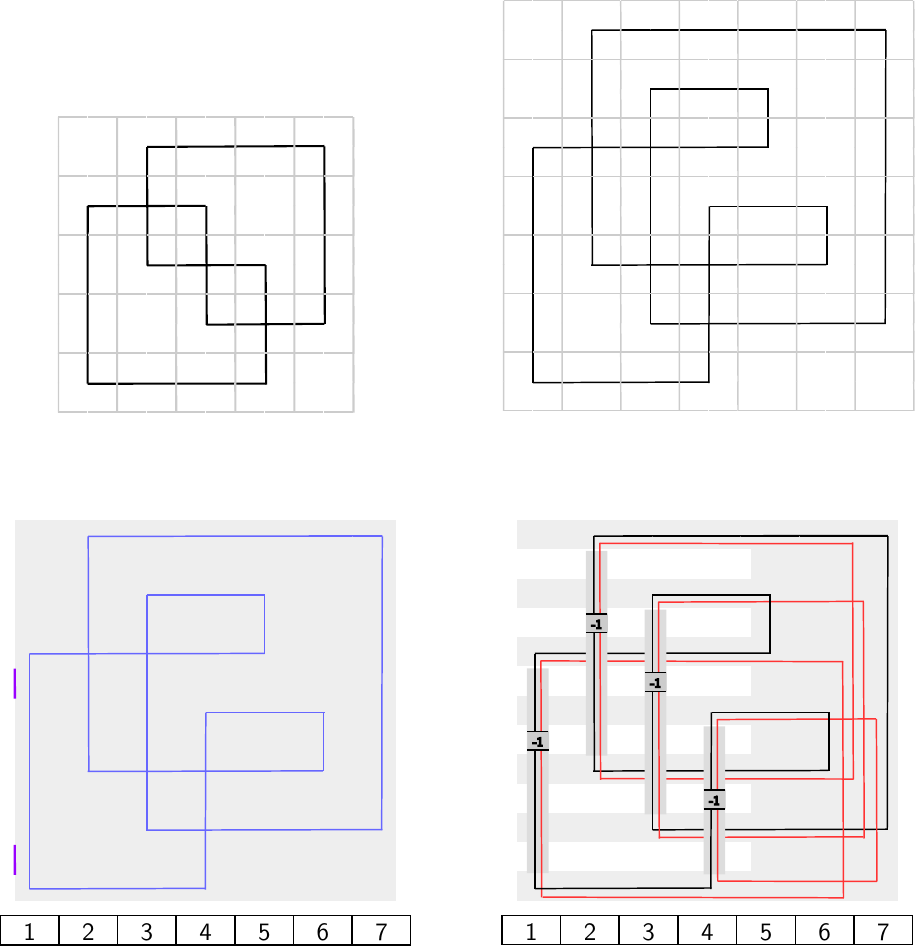}};
    \begin{scope}[x={(image.south east)},y={(image.north west)}]
        \node [font=\scriptsize] at (0.068, 0.72) {$\widetilde{C_0'}$};
        \node [font=\scriptsize] at (0.55, 0.72) {$\widetilde{C_0''}$};
        \node [font=\scriptsize] at (0.01, 0.16) {$B_0$};
        \node [font=\scriptsize] at (0.55, 0.16) {$C_0$};
        \node at (0.23, 0.513) {(a)};
        \node at (0.785, 0.513) {(b)};
        \node [below] at (0.23, -0.02) {(c)};
        \node [below] at (0.78, -0.02) {(d)};
    \end{scope}
\end{tikzpicture}
\caption{(a) The knot $\widetilde{C_0'}$ in grid position. (b) The knot $\widetilde{C_0''}$ in grid position obtained by performing an SW stabilization at an NE corner and a horizontal commutation. (c) The guide line $B_0$ drawn on the 0-handle $D^2$. (d) The PALF $P$ with the monodromy factorization $(C_0, C_4, C_3, C_2, C_1)$. $C_i$ ($1 \leq i \leq 4$) denotes a red simple closed curve passing over the 1-handle in the $i$-th column.}
\label{fig:X0201}
\end{figure}

\begin{figure}[htbp]
\centering  
\begin{tikzpicture}
    \node[anchor=south west, inner sep=0] (image) at (0,0)  {\includegraphics[scale=0.75]{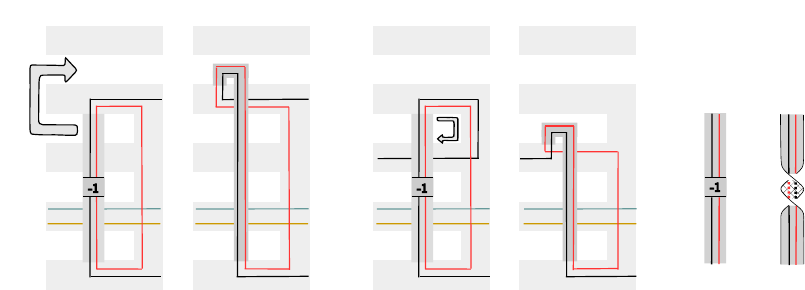}};
    \begin{scope}[x={(image.south east)},y={(image.north west)}]
        \node [below] at (0.22, -0.03) {(a)};
        \node [below] at (0.63, -0.03) {(b)};
         \node at (0.935, 0.34) {$=$};
    \end{scope}
\end{tikzpicture}
\caption{Sliding the attaching sphere of a 1-handle on the boundary of the 0-handle.}
\label{fig:X0202}
\end{figure}

\begin{figure}[htbp]
\centering  
\begin{tikzpicture}
    \node[anchor=south west, inner sep=0] (image) at (0,0)  {\includegraphics[scale=1]{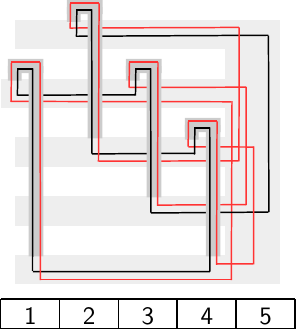}};
    \begin{scope}[x={(image.south east)},y={(image.north west)}]
        \node [font=\scriptsize] at (0.055, 0.62) {$C_0$};
    \end{scope}
\end{tikzpicture}
\caption{The resulting PALF obtained by applying the sliding operations of 1-handles to Columns~1 through 4 of Figure~\ref{fig:X0201}(d), with the monodromy factorization $(C_0, C_4, C_3, C_2, C_1)$. $C_i$ ($1 \leq i \leq 4$) denotes a red simple closed curve passing over the 1-handle in the $i$-th column.}
\label{fig:X0203}
\end{figure}

In \cite{Tanaka2025construction}, during the construction of the regular fiber, the 0-handle was deformed into a comb-like shape by an isotopy parallel to each row. In the present paper, we allow for more flexible deformations by employing isotopies in the vertical direction, provided that they are not obstructed by the horizontal segments of the guide line $B_0$.

A concrete example of the construction of a PALF is shown in Figure~\ref{fig:X0204}. 
We draw a copy of the knot $\widetilde{C_0'}$ in grid position, shown in Figure~\ref{fig:X0201}(a), on the 0-handle $D^2$ (which is represented as a gray square) to serve as the guide line $B_0$ (see Figure~\ref{fig:X0204}(a)). 
The first column of the grid diagram contains a vertical segment of the guide line $B_0$, whose two endpoints (marked in purple) lie on the boundary of the 0-handle. We then attach a 1-handle at this position.
When a vertical segment of the guide line $B_0$ has an NW corner (resp.\ an NE corner), we attach a 1-handle to the boundary of the 0-handle to lift this segment, as shown in Figure~\ref{fig:X0205}(a) (resp.\ (b)).

\begin{figure}[htbp]
\centering  
\begin{tikzpicture}
    \node[anchor=south west, inner sep=0] (image) at (0,0)  {\includegraphics[scale=0.75]{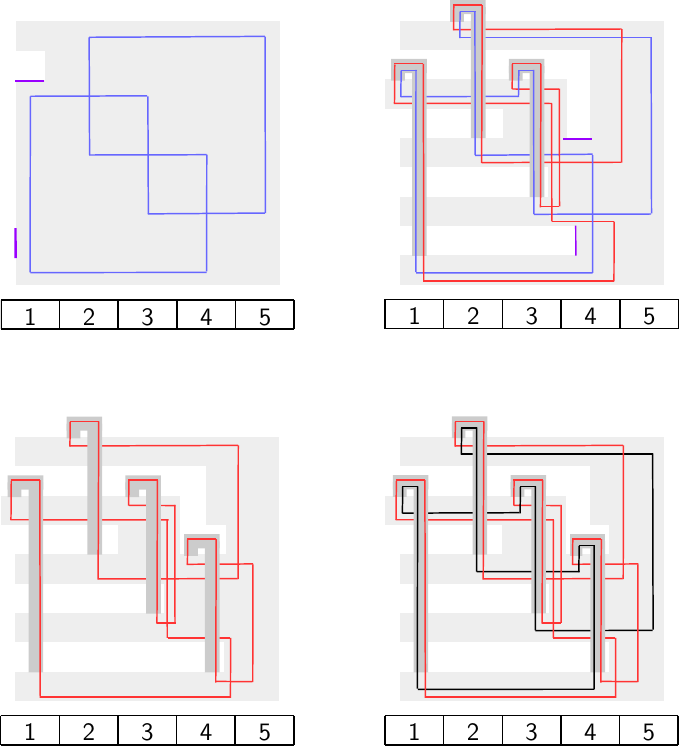}};
    \begin{scope}[x={(image.south east)},y={(image.north west)}]
        \node [font=\scriptsize] at (0.01, 0.835) {$B_0$};
        \node [font=\scriptsize] at (0.58, 0.835) {$B_0$};
        \node [font=\scriptsize] at (0.58, 0.27) {$C_0$};
        \node at (0.22, 0.51) {(a)};
        \node at (0.785, 0.51) {(b)};
        \node [below] at (0.22, -0.03) {(c)};
        \node [below] at (0.785, -0.03) {(d)};
    \end{scope}
\end{tikzpicture}
\caption{A concrete example of the modified construction of a PALF. (a) The guide line $B_0$ drawn on the 0-handle $D^2$. (b) Deforming the boundary of the 0-handle for attaching a 1-handle. (c) The PALF $SF$ with the monodromy factorization $(C_4, C_3, C_2, C_1)$. (d) The PALF $P$ with the monodromy factorization $(C_0, C_4, C_3, C_2, C_1)$. $C_i$ ($1 \leq i \leq 4$) denotes a red simple closed curve passing over the 1-handle in the $i$-th column.}
\label{fig:X0204}
\end{figure}

\begin{figure}[htbp]
\centering  
\begin{tikzpicture}
    \node[anchor=south west, inner sep=0] (image) at (0,0)  {\includegraphics[scale=0.9]{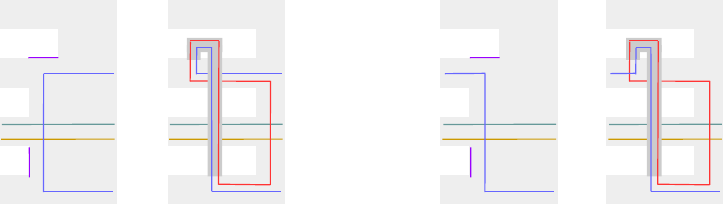}};
    \begin{scope}[x={(image.south east)},y={(image.north west)}]
        \node [below] at (0.2, -0.04) {(a)};
        \node [below] at (0.81, -0.04) {(b)};
    \end{scope}
\end{tikzpicture}
\caption{Attaching a 1-handle to the boundary of the 0-handle to lift a vertical segment of the guide line with (a) an NW corner and (b) an NE corner. The red curve represents the newly added simple closed curve.}
\label{fig:X0205}
\end{figure}

In Columns~2 and 3, we deform the boundary of the 0-handle into a comb-like shape by an isotopy parallel to each row, position both the upper and lower-left endpoints of the vertical segments of the guide line $B_0$ on the boundary of the 0-handle, and attach 1-handles. 
In Column~4, to position the upper endpoint of the vertical segment of the guide line $B_0$ on the boundary of the 0-handle, we push down the boundary of the 0-handle from above and attach a 1-handle (see Figure~\ref{fig:X0204}(b)). 
Then, by removing the guide line $B_0$, we construct the PALF $SF$ whose total space is diffeomorphic to $D^4$ (see Figure~\ref{fig:X0204}(c)).
Furthermore, by placing the closed curve $C_0$ exactly where the guide line $B_0$ was removed, we obtain the PALF $P$ (see Figure~\ref{fig:X0204}(d)). 
We assign the monodromy factorization $(C_4, C_3, C_2, C_1)$ to the PALF $SF$, and the monodromy factorization $(C_0, C_4, C_3, C_2, C_1)$ to the PALF $P$. 
We can confirm that $SF$ and $P$ are PALFs by checking the conditions given in Subsection~2.3 of \cite{Tanaka2025construction}.

We show that the PALF $P$ is diffeomorphic to the original Stein surface $\Pi$. 
A Kirby diagram $KD'$ of the original Stein surface $\Pi$ is shown in Figure~\ref{fig:X0206}(a). 
The curve $\overline{C'_0}$ is the attaching circle of the 2-handle corresponding to $\widetilde{C'_0}$, which represents a Legendrian knot with a writhe of $3$ and two left cusps. 
By the theorem of Eliashberg and Gompf (cf.\ Theorem~2.1 in \cite{Tanaka2025construction}), the framing of $\overline{C'_0}$ must be $0$ (since $tb(\widetilde{C'_0}) - 1 = (3 - 2) - 1 = 0$). 

Next, we construct a Kirby diagram $KD$ representing a 4-dimensional 2-handlebody diffeomorphic to the total space of the given PALF $P$, using the procedure $\Phi$ described in Subsection~2.3 of \cite{Tanaka2025construction} (see Figure~\ref{fig:X0206}(b)). 
In the Kirby diagram $KD$, $\overline{C_i}$ is the attaching circle of the 2-handle corresponding to the vanishing cycle $C_i$ for each $0 \leq i \leq 4$.
The framing of $\overline{C_i}$ is set to be one less than its surface framing induced by the regular fiber.
Since the surface framing of $\overline{C_0}$ is $1$, its framing is $0$. 
Since the surface framing of each $\overline{C_i}$ ($1 \leq i \leq 4$) is $-1$, its framing is $-2$.

\begin{figure}[htbp]
\centering  
\begin{tikzpicture}
    \node[anchor=south west, inner sep=0] (image) at (0,0)  {\includegraphics[scale=0.9]{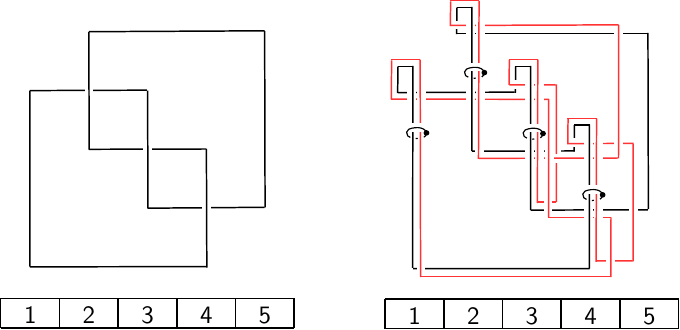}};
    \begin{scope}[x={(image.south east)},y={(image.north west)}]
        \node [font=\scriptsize] at (0.011, 0.65) {$\overline{C_0'}$};
        \node [font=\scriptsize] at (0.578, 0.65) {$\overline{C_0}$};
        \node [font=\scriptsize] at (0.645, 0.65) {$\overline{C_1}$};
        \node [font=\scriptsize] at (0.735, 0.97) {$\overline{C_2}$};
        \node [font=\scriptsize] at (0.82, 0.775) {$\overline{C_3}$};
        \node [font=\scriptsize] at (0.86, 0.68) {$\overline{C_4}$};
        \node [below] at (0.22, -0.04) {(a)};
        \node [below] at (0.787, -0.04) {(b)};
    \end{scope}
\end{tikzpicture}
\caption{(a) A Kirby diagram $KD'$ of the original Stein surface $\Pi$. The curve $\overline{C'_0}$ is the attaching circle of the 2-handle with framing $0$. (b) A Kirby diagram $KD$ of the total space of the PALF $P$. The framing of $\overline{C_0}$ is $0$, and the framing of each $\overline{C_i}$ ($1 \leq i \leq 4$) is $-2$.}
\label{fig:X0206}
\end{figure}

\begin{figure}[htbp]
\centering  
\begin{tikzpicture}
    \node[anchor=south west, inner sep=0] (image) at (0,0)  {\includegraphics[scale=0.5]{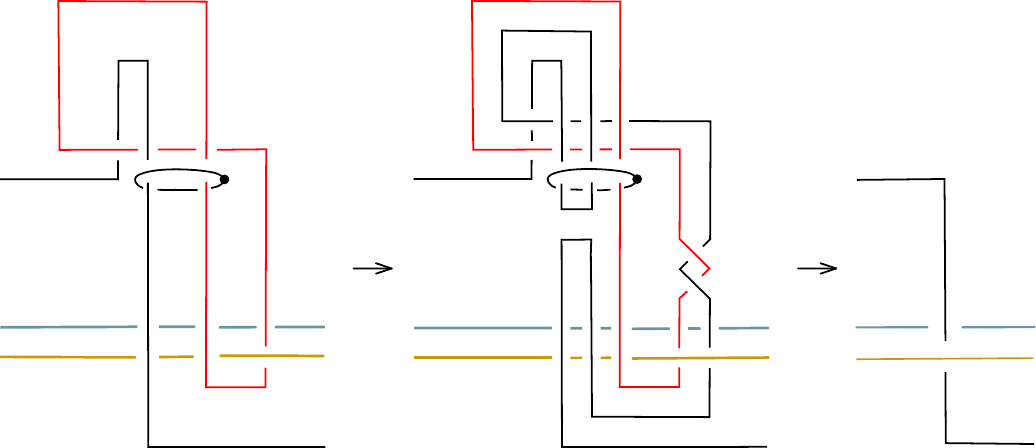}};
    \begin{scope}[x={(image.south east)},y={(image.north west)}]
        \node [font=\scriptsize] at (0.02, 0.68) {$\overline{C_0}$};
        \node [font=\scriptsize] at (0.23, 0.75) {$\overline{C_i}$};
        \node [below] at (0.15, -0.04) {(a)};
        \node [below] at (0.59, -0.04) {(b)};
        \node [below] at (0.92, -0.04) {(c)};
    \end{scope}
\end{tikzpicture}
\caption{A Kirby move applied to the Kirby diagram $KD$. The framing of each $\overline{C_i}$ ($1 \leq i \leq 4$) is $-2$, and the framing of $\overline{C_0}$ remains unchanged.}
\label{fig:X0207}
\end{figure}

We illustrate the sequence of Kirby moves applied to the Kirby diagram $KD$ to transform it into $KD'$, proceeding sequentially from Column~4 down to Column~1 (see Figure~\ref{fig:X0207}). 
In this figure, the red curve represents $\overline{C_i}$ passing over the 1-handle in the $i$-th column, and the black curve represents $\overline{C_0}$. 
Let the orange segments represent the portions of the attaching circles $\overline{C_j}$ ($1 \leq j < i$) that pass over the right part of $\overline{C_i}$. 
Similarly, let the dark green segments represent the portions of the attaching circle $\overline{C_0}$ itself that pass under the depicted segment of $\overline{C_0}$. 
We slide the 2-handle $\overline{C_0}$ over $\overline{C_i}$, and subsequently cancel the 1-handle/2-handle pair consisting of the dotted circle and $\overline{C_i}$. 
Through this sequence of moves from (a) to (b) and finally to (c), the framing of $\overline{C_0}$ remains invariant. 
The orange segments now pass over $\overline{C_0}$, and the dark green segments pass under $\overline{C_0}$. 

In Figure~\ref{fig:X0206}(b), the framing of $\overline{C_0}$ is $0$. 
The vertical stacking order of the attaching circles from bottom to top is $\overline{C_0}, \overline{C_4}, \ldots, \overline{C_1}$. 
Performing the Kirby move shown in Figure~\ref{fig:X0207} at Column~4 leaves the framing of $\overline{C_0}$ invariant at $0$, and changes the vertical stacking order of the attaching circles from bottom to top to $\overline{C_0}, \overline{C_3}, \ldots, \overline{C_1}$.
In this manner, by applying the Kirby moves sequentially from Column~4 to Column~1, we arrive at the Kirby diagram $KD'$.
Therefore, the PALF $P$ is diffeomorphic to the original Stein surface $\Pi$.

From the above discussion, we obtain the following as a corollary to Lemma~3.1 in \cite{Tanaka2025construction}.

\begin{cor}\label{cor:construction1}
The total space of the PALF corresponding to the ordered collection of simple closed curves on the surface with boundary, obtained by the modified construction above, is diffeomorphic to the 4-dimensional handlebody consisting of a 0-handle and a single 2-handle attached along $\widetilde{C_0}$ with framing $tb(\widetilde{C_0})-1$.
\end{cor}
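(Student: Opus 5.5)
The argument generalizes the trefoil computation above to an arbitrary Legendrian knot $\widetilde{C_0}$ with grid representative $\widetilde{C_0'}$. First I check that the modified construction is a PALF. Draw the guide line $B_0$ as a copy of $\widetilde{C_0'}$ on the 0-handle. Attach one 1-handle per column, in the NW or NE fashion of Figure~\ref{fig:X0205}. The allowed vertical isotopies of the 0-handle are never obstructed by horizontal segments of $B_0$, so both endpoints of each vertical segment can be placed on the boundary of the 0-handle. I then verify the conditions of Subsection~2.3 of \cite{Tanaka2025construction}:
\begin{itemize}
\item the fiber $F$ is connected with nonempty boundary;
\item each $C_i$ ($i\ge 1$) is homologically nontrivial, since it runs once over its own 1-handle;
\item $C_0$, placed along $B_0$, is homologically nontrivial.
\end{itemize}
This gives the PALF $SF$ with factorization $(C_m,\dots,C_1)$. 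Each $C_i$ is dual to its 1-handle, so the total space of $SF$ is $D^4$. Adding $C_0$ gives the PALF $P$ with factorization $(C_0,C_m,\dots,C_1)$.

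Next I apply the procedure $\Phi$ to obtain a Kirby diagram $KD$. Its ingredients are:
\begin{itemize}
\item one dotted circle per column;
\item 2-handles $\overline{C_i}$ with framing $-2$, since each has surface framing $-1$;
\item the 2-handle $\overline{C_0}$ with framing equal to its surface framing minus one.
\end{itemize}
Working column by column from the last column to the first, I perform the move of Figure~\ref{fig:X0207}. I slide $\overline{C_0}$ over $\overline{C_i}$, then cancel the dotted circle with $\overline{C_i}$. The local check from the trefoil case applies verbatim, because it depends only on the local picture of a single column of NW or NE type. That check shows two things: the move replaces the lifted portion of $\overline{C_0}$ by the planar vertical segment of the grid, with the crossing information fixed by the stacking order (vertical segments pass over horizontal ones), and the framing of $\overline{C_0}$ is unchanged. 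After all columns are processed, $\overline{C_0}$ becomes exactly the knot of $\widetilde{C_0'}$, and it carries its original framing.

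The remaining and main step is the framing computation: showing that the surface framing of $C_0$ in $F$ equals $tb(\widetilde{C_0'})=\operatorname{writhe}-\#\{\text{left cusps}\}$, so that the framing is $tb-1$. I would compute the surface framing as the blackboard writhe of $C_0$, corrected by the twisting contributed at each corner where the 0-handle boundary has been deformed. The claim to establish is that each NW/SE-type turn contributes $0$ and each left cusp (an SW or NW corner pair) contributes $-1$. I would verify this locally for the comb-type isotopy and for the push-down isotopy, since both are planar isotopies of the 0-handle. A planar isotopy does not change the blackboard framing, so the only possible issue is whether vertical pushes create extra half-twists; I expect they do not. Finally, $tb$ is invariant under the grid/front correspondence, so $tb(\widetilde{C_0'})=tb(\widetilde{C_0})$. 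Combined with Lemma~3.1 of \cite{Tanaka2025construction}, this identifies the total space with the trace of $\widetilde{C_0}$ with framing $tb(\widetilde{C_0})-1$.
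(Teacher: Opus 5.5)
Your overall architecture matches the paper's. You verify the PALF conditions, pass to $KD$ via $\Phi$, and perform the slide-and-cancel move of Figure~\ref{fig:X0207} from the last column to the first, using that it leaves the framing of $\overline{C_0}$ unchanged. The gap is in the step you yourself call the main one: that the surface framing of $C_0$ equals $tb(\widetilde{C_0'})$. The mechanism you propose does not produce this.

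First, in the paper's convention an NW corner \emph{is} a left cusp, and SE corners are the right cusps. So ``NW/SE turns contribute $0$'' contradicts ``each left cusp contributes $-1$''. Also, a left cusp is a single NW corner, not an ``SW or NW corner pair''. More seriously, you look for the $-1$'s in twisting created by the isotopies of the 0-handle, and then argue, correctly, that these planar isotopies change nothing. As written, nothing in your plan generates the $-1$'s. With vanishing corrections your accounting returns the writhe of the grid diagram, which for the trefoil is $3$, i.e.\ framing $2$ instead of $0$.

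The missing idea is that the $-1$ comes from the 1-handle attachment itself. Lifting a vertical segment whose upper endpoint is an NW corner, as in Figure~\ref{fig:X0205}(a), decreases the writhe of $\overline{C_0}$ by one. This is item~(1)(a) of Subsection~\ref{subsec:comment}, i.e.\ the content of Lemma~3.1 of \cite{Tanaka2025construction}. An NE-type attachment, Figure~\ref{fig:X0205}(b), adds nothing. The rest of the count is as follows:
\begin{itemize}
\item Since the bands are drawn untwisted, the surface framing of $C_0$ is the writhe of its diagram.
\item The grid crossings persist with the same sign, because the band carrying a vertical segment passes over the horizontal one.
\item The vertical pushes move only the boundary of the 0-handle and never cross $B_0$, so they leave the diagram of $C_0$ untouched.
\end{itemize}
Hence the surface framing is $w(\widetilde{C_0'})-\#\{\text{NW corners}\}=w(\widetilde{C_0'})-\#\{\text{left cusps}\}=tb(\widetilde{C_0'})$; for the trefoil this is $3-2=1$, as in the paper.

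Relatedly, your assertion that each $C_i$ has surface framing $-1$ is not automatic for arbitrary vertical isotopies. It is item~(2)(a) of Subsection~\ref{subsec:comment} that gives $C_i$ its negative self-crossing: the 0-handle boundary must be pushed from left to right under each 1-handle. The framing invariance in Figure~\ref{fig:X0207} is a computation with framing $-2$ on $\overline{C_i}$, so your local check applies ``verbatim'' only once this rule is imposed. Finally, the rightmost column is never lifted, so there is not one 1-handle per column.
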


\subsection{2-handlebodies with 1-handles}\label{subsec:construction2}

As a concrete example, following \cite{Tanaka2025construction}, we consider the original Stein surface $\Pi$ consisting of a 0-handle, one 1-handle, and two 2-handles. 
These 2-handles are attached along the components $\widetilde{C_{01}}$ and $\widetilde{C_{02}}$ of a Legendrian link with framings $-2$ and $0$, respectively.
Figure~\ref{fig:X0208}(a) is reproduced from Figure~15(b) in \cite{Tanaka2025construction}. 
The link components $\widetilde{C_{01}'}$ and $\widetilde{C_{02}'}$ in grid position are obtained from the front projection of the Legendrian link consisting of $\widetilde{C_{01}}$ and $\widetilde{C_{02}}$ (see Figure~15(a) in \cite{Tanaka2025construction}). A small square represents the hole of $S^1 \times D^1$.

\begin{figure}[htbp]
\centering  
\begin{tikzpicture}
    \node[anchor=south west, inner sep=0] (image) at (0,0)  {\includegraphics[scale=0.7]{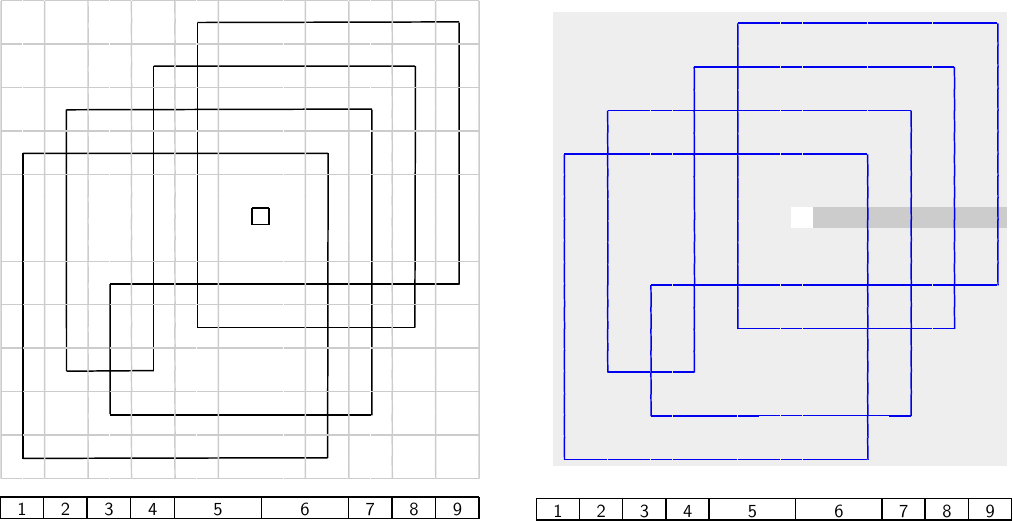}};
    \begin{scope}[x={(image.south east)},y={(image.north west)}]
        \node [font=\scriptsize] at (0.0, 0.62) {$\widetilde{C_{01}'}$};
        \node [font=\scriptsize] at (0.048, 0.62) {$\widetilde{C_{02}'}$};
        \node [font=\scriptsize] at (0.535, 0.62) {$B_{01}$};
        \node [font=\scriptsize] at (0.58, 0.62) {$B_{02}$};
        \node [below] at (0.25, -0.04) {(a)};
        \node [below] at (0.75, -0.04) {(b)};
    \end{scope}
\end{tikzpicture}
\caption{(a) The link $\widetilde{C_{0k}'}$ in grid position ($k=1, 2$). (b) The guide line $B_{0k}$ drawn on $S^1 \times D^1$ ($k=1, 2$).}
\label{fig:X0208}
\end{figure}

We draw the knots $\widetilde{C_{0k}'}$ ($k=1, 2$) in grid position on the 0-handle $S^1 \times D^1$, which is represented as a gray square, to serve as the guide lines $B_{0k}$ (see Figure~\ref{fig:X0208}(b)). 
A small square represents the hole of $S^1 \times D^1$, and the strands of the link cannot be moved across this small square by an isotopy.

From Column~1 to Column~8, when a vertical segment of a guide line $B_{0k}$ has an NW corner (resp.\ an NE corner), we attach a 1-handle to the boundary of the 0-handle to lift this segment, as shown in Figure~\ref{fig:X0205}(a) (resp.\ (b)). 
A newly added simple closed curve $C_i$ is formed, which does not cross the hole of $S^1 \times D^1$.

Then, by removing the guide lines $B_{0k}$, we construct the PALF $SF$ whose total space is diffeomorphic to $S^1 \times D^3$ (see Figure~\ref{fig:X0209}(a)).
Furthermore, by placing the closed curves $C_{0k}$ exactly where the guide lines $B_{0k}$ were removed, we obtain the PALF $P$ (see Figure~\ref{fig:X0209}(b)).
We assign the monodromy factorization $(C_8, C_7, C_6, C_5, C_4, C_3, C_2, C_1)$ to the PALF $SF$, and the monodromy factorization $(C_{01}, C_{02}, C_8, C_7, C_6, C_5, C_4, C_3, C_2, C_1)$ to the PALF $P$.

\begin{figure}[htbp]
\centering  
\begin{tikzpicture}
    \node[anchor=south west, inner sep=0] (image) at (0,0)  {\includegraphics[scale=0.75]{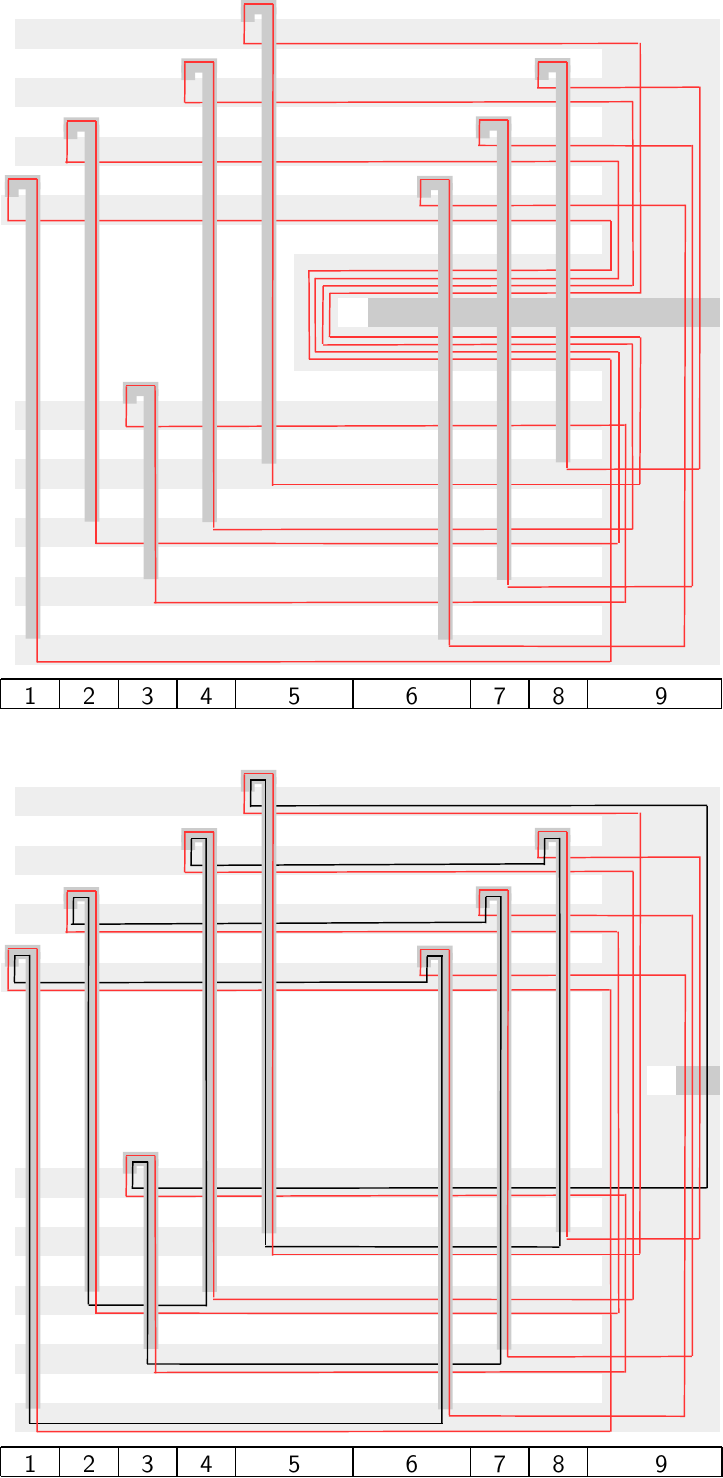}};
    \begin{scope}[x={(image.south east)},y={(image.north west)}]
        \node [font=\scriptsize] at (0.01, 0.3) {$C_{01}$};
        \node [font=\scriptsize] at (0.09, 0.3) {$C_{02}$};
        \node at (0.5, 0.495) {(a)};
        \node [below] at (0.5, -0.01) {(b)};
    \end{scope}
\end{tikzpicture}
\caption{(a) The PALF $SF$ with the monodromy factorization $(C_8, C_7, C_6, C_5, C_4, C_3, C_2, C_1)$.  
(b) The PALF $P$ with the monodromy factorization $(C_{01}, C_{02}, C_8, C_7, C_6, C_5, C_4, C_3, C_2, C_1)$. $C_i$ ($1 \leq i \leq 8$) denotes a red simple closed curve passing over the 1-handle in the $i$-th column.}
\label{fig:X0209}
\end{figure}

We can verify that $SF$ and $P$ are indeed PALFs by checking the conditions given in Subsection~2.3 of \cite{Tanaka2025construction}.
By the same argument as in Subsection~\ref{subsec:construction1}, we can show that the PALF $P$ is diffeomorphic to the original Stein surface $\Pi$.
As a corollary to Theorem~3.2 in \cite{Tanaka2025construction}, we obtain the following.

\begin{cor}\label{cor:construction2}
The total space of the PALF corresponding to the ordered collection of simple closed curves on the surface with boundary, obtained by the modified construction above, is diffeomorphic to the handlebody consisting of a 0-handle, $\ell$ 1-handles, and $m$ 2-handles attached along the Legendrian link components $\widetilde{C_{0k}}$ ($1 \leq k \leq m$) with framings $tb(\widetilde{C_{0k}})-1$.
\end{cor}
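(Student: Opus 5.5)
The plan is to reduce to the setting of Theorem~3.2 in \cite{Tanaka2025construction} and replay the Kirby-calculus argument of Subsection~\ref{subsec:construction1} column by column. First, I will check that the output of the modified construction is a PALF. The 0-handle is now $S^1\times D^1$, or more generally the planar surface for $\ell$ 1-handles, drawn with $\ell$ small squares as holes. Each column in which a guide line $B_{0k}$ has a vertical segment gets one 1-handle, attached as in Figure~\ref{fig:X0205}(a) or (b). The resulting curve $C_i$ avoids the holes, so the surface obtained is connected with nonempty boundary. For the fibration $SF$, each $C_i$ runs once over a 1-handle that no earlier curve in the factorization uses, so each $C_i$ is homologically nontrivial. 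As in \cite{Tanaka2025construction}, $SF$ is built from the 0-handle by 1-handle/2-handle cancelling pairs, so its total space is $\natural^{\ell} S^1\times D^3$. Adding the curves $C_{0k}$ in the positions of $B_{0k}$ gives $P$. I still need to check that each $C_{0k}$ is nonseparating: it passes over some 1-handle in a column that it crosses, or around a hole.

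Next, I will apply the procedure $\Phi$ to obtain a Kirby diagram $KD$ of $P$. It has $\ell$ dotted circles from the holes, one dotted circle for each column 1-handle, the attaching circles $\overline{C_i}$ with framing $-2$ (surface framing $-1$), and $\overline{C_{0k}}$ with framing equal to its surface framing minus one. I will then perform the moves of Figure~\ref{fig:X0207} from the last column down to Column~1. At each step, slide every $\overline{C_{0k}}$ that passes through the current column over $\overline{C_i}$, then cancel the pair consisting of the column's dotted circle and $\overline{C_i}$. The local picture is the same whether the corner is NW or NE, and whether the 0-handle boundary was reached by a horizontal comb isotopy or a vertical push. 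This is because these isotopies are supported away from the horizontal segments of the guide lines, so they change only the planar position of the 1-handle feet. After all columns are processed, what remains are the $\ell$ hole dotted circles and the link $\overline{C_{0k}'}$, which is the front of $\widetilde{C_{0k}}$ in the grid. Here overcrossings come from the orange/dark-green rule: at each crossing, the vertical strand lies over the horizontal one.

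The main obstacle is the framing count when several components $\overline{C_{0k}}$ pass through the same column. In that case, slides over $\overline{C_i}$ introduce linking between distinct components and can change framings. I would handle this as in \cite{Tanaka2025construction}: the slide of $\overline{C_{0k}}$ over a $-2$-framed unknot that links it once with sign determined by the corner type contributes exactly the crossing change, plus the $\pm1$ framing correction of that corner. Summing over columns, each component's final framing equals its writhe minus its number of left cusps minus one, namely $tb(\widetilde{C_{0k}})-1$ by Eliashberg--Gompf. Mutual linking matches the front projection. The surface framing of $C_{0k}$ in the modified fiber agrees with that in the original construction, since the two fibers differ by an isotopy of the 0-handle and by slides of 1-handle feet along its boundary (Figure~\ref{fig:X0202}), which preserve surface framings. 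So the count reduces to the one already carried out in Theorem~3.2, and the resulting diagram is the handlebody in the statement.
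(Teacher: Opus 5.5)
Your route is the paper's own: verify the PALF conditions of Subsection~2.3 of \cite{Tanaka2025construction}, pass to a Kirby diagram via $\Phi$, perform the slide-and-cancel of Figure~\ref{fig:X0207} column by column from right to left, and invoke Theorem~3.2. The gap is in the framing step.

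You transfer the surface framing of $C_{0k}$ from the original construction by asserting that the modified fiber differs from the original one only by an isotopy of the 0-handle and slides of 1-handle feet along its boundary. That is false in general, and breaking it is exactly what the modified construction is for. For the twist knot $W_2$, a vertical push yields a fiber with five 1-handles, genus $1$ and $4$ boundary components. The purely horizontal comb deformation yields five 1-handles, genus $2$ and $2$ boundary components (end of Subsection~\ref{subsec:twist}). These surfaces are not even homeomorphic, so no framing-preserving isotopy relates the modified fibers to a single reference fiber.

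What is needed is a local computation, and your remark that the 0-handle isotopies only move the 1-handle feet is the right ingredient. Use it on the framing itself:
\begin{itemize}
\item $C_{0k}$ lies flat on the planar 0-handle away from its lifted vertical segments.
\item Every crossing is vertical over horizontal.
\item Each lifted segment contributes a correction fixed by its local model in Figure~\ref{fig:X0205}. The NW and NE models are not the same: the $-1$ sits exactly at the NW corners, i.e.\ the left cusps (cf.\ remark~(1)(a)).
\end{itemize}
Summing gives surface framing $w_k-c_k=tb(\widetilde{C_{0k}})$, where $w_k$ and $c_k$ are the writhe and the number of left cusps of the front of $\widetilde{C_{0k}}$. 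This holds however the 0-handle was deformed, so $\overline{C_{0k}}$ starts with framing $tb-1$.

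This also repairs your bookkeeping, which as written conflicts with the paper. Each slide-and-cancel in Figure~\ref{fig:X0207} leaves the framing coefficient of the slid component unchanged. For the trefoil it is $0$ before and after, despite two left cusps. So the cusp corrections live in the initial surface framing; if you also charge them to the slides, they are counted twice.

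Your ``main obstacle'' does not occur. Each column 1-handle is traversed by exactly one $\overline{C_{0k}}$, namely the component whose segment was lifted there. A handle slide changes only the framing of the handle being slid. Other components are affected only through crossings, which the orange/dark-green rule already settles as vertical over horizontal.

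Minor point: the segment in the rightmost column is never lifted, so not every column receives a 1-handle.
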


\subsection{Remarks on the construction method}\label{subsec:comment}

To apply our construction method in practice, we explain the allowed and prohibited operations using concrete examples. For this purpose, it is crucial to distinguish between the intrinsic information of the PALF constructed by our method and the induced auxiliary information. 

As concrete examples of the constructed PALFs, we consider the regular fibers shown in Figures~\ref{fig:X0203} and \ref{fig:X0204}(d) in Subsection~\ref{subsec:construction1}. The regular fiber consists of one 0-handle and four 1-handles, where the two attaching spheres of each 1-handle are attached to the boundary of the 0-handle.

\vspace{0.5em}
\noindent\textbf{(1) Lifting the guide line $B_0$ over a 1-handle}

Lifting a vertical segment of the guide line $B_0$ over a 1-handle is required if and only if at least one of the following conditions is satisfied:
\begin{enumerate}
    \item[(a)] The vertical segment of $B_0$ has an NW corner. Since this NW corner corresponds to a left cusp of the original Legendrian knot, it is necessary to adjust (specifically, decrease by one) the writhe of $\overline{C_0}$ in the Kirby diagram corresponding to the constructed PALF. For details, we refer the reader to the proof of Lemma~3.1 in \cite{Tanaka2025construction}.
    \item[(b)] The vertical segment of $B_0$ intersects a horizontal segment. Since self-intersections of the guide line are not permitted on the 0-handle, the vertical segment must be lifted over a 1-handle.
    \item[(c)] The vertical segment of $B_0$ must be lifted over a 1-handle to allow the 0-handle to be deformed via an isotopy passing under this 1-handle, so that the boundary of the 0-handle can reach the required position to attach a 1-handle corresponding to a vertical segment in a column to the right.
\end{enumerate}
If none of (a), (b), or (c) applies, the vertical segment of the guide line $B_0$ is not (and does not need to be) lifted over a 1-handle. In particular, the vertical segment in the rightmost column (i.e., the column with the maximum index) never satisfies any of (a), (b), or (c); thus, it is never lifted over a 1-handle (for example, see the 5th column in Figures~\ref{fig:X0203} and \ref{fig:X0204}(d)).

\vspace{0.5em}
\noindent\textbf{(2) Deformation of the 0-handle via isotopy}
\begin{enumerate}
    \item[(a)] When lifting a vertical segment of the guide line $B_0$ over a 1-handle and deforming the 0-handle via an isotopy passing under this 1-handle, the boundary of the 0-handle must be pushed from left to right under the 1-handle. This requirement ensures that the curve $C_i$ associated with the canceling pair acquires an algebraic self-intersection, which is necessary for the corresponding knot to have a writhe of $-1$ in the Kirby diagram.
    \item[(b)] Provided that condition (a) is satisfied, the 0-handle can be freely deformed via isotopy. For example, the 0-handle may be deformed into a narrow neighborhood of $B_0$ (essentially retracting it to that neighborhood), or the deformation may be kept to a bare minimum. Since the curves $C_i$ ($1 \leq i \leq 4$) as well as $B_0$ (i.e., $C_0$) lie on the 0-handle, it is permissible to deform the 0-handle via isotopy into a shape that facilitates the placement of these curves (e.g., Figures~\ref{fig:X0302}(b), (d), (f) and \ref{fig:X0303}(b)).
\end{enumerate}

\vspace{0.5em}
\noindent\textbf{(3) Drawing the curves $C_i$ ($1 \leq i \leq 4$) arising from the canceling pairs}

Regarding Figure~\ref{fig:X0203}, the boundary of the 0-handle is a single circle. If we represent the attaching spheres of each 1-handle by its column number, reading counterclockwise from the top-right point yields the sequence of attaching spheres for the 1-handles in Column 2, Column 3, Column 1, \dots, and finally Column 1. Up to cyclic permutation, this sequence is denoted by $(2, 3, 1, 4, 2, 3, 4, 1)$. Since the curves $C_0, C_1, C_2, C_3, C_4$ are arranged from bottom to top on the 0-handle in the order $C_0, C_4, C_3, C_2, C_1$, these curves are uniquely placed up to isotopy. In particular, the algebraic intersection number between any two curves among $C_1, C_2, C_3, C_4$ on the 0-handle is either $0$ or $\pm 1$ (depending on the choice of orientations). If the arrangement of their attaching spheres along the boundary of the 0-handle is alternating (e.g., for $C_1$ and $C_2$), the algebraic intersection number is $\pm 1$; if it is not alternating (e.g., for $C_1$ and $C_4$), the algebraic intersection number is $0$. As part of the construction, the curves $C_1, C_2, C_3, C_4$ can be drawn freely on the 0-handle. (Alternatively, they may be drawn after the four 1-handles have been attached to the 0-handle and the overall shape of the regular fiber has been completely determined.)

The three items (1), (2), and (3) above constitute the essential rules for our construction method. We conclude this subsection by providing a concrete example and stating a general principle concerning the vertical translation of a knot in grid position.

Figure~\ref{fig:X0211}(a) depicts the figure-eight knot. The regular fiber of the PALF constructed from it is shown in Figure~\ref{fig:X0211}(b). In this figure, the boundary of the 0-handle passes under the 1-handle that lifts the vertical segment in Column~4. It is then pushed upward via an isotopy in Column~6 to reach the vicinity of the upper endpoint of the vertical segment in Column~5. One can readily verify that this regular fiber fully satisfies the conditions (1), (2), and (3) outlined above.

On the other hand, Figure~\ref{fig:X0211}(c) illustrates the regular fiber of a PALF constructed from the knot in grid position obtained by vertically translating the 1st row of the figure-eight knot in Figure~\ref{fig:X0211}(a) to the 6th row. Via an isotopy passing through the intermediate state shown in Figure~\ref{fig:X0211}(d), the regular fiber in Figure~\ref{fig:X0211}(c) can be deformed into the one in Figure~\ref{fig:X0211}(b). Furthermore, it can be verified that the sequence of the attaching spheres of the 1-handles along the boundary of the 0-handle is identical up to cyclic permutation in both Figures~\ref{fig:X0211}(b) and (c).

In general, the following principle holds. Suppose that a Stein 2-handlebody is given as the knot trace of a knot in grid position. Applying a vertical translation to this knot yields another knot in grid position, defining a new knot trace. Let $F$ be the regular fiber of the PALF $P$ constructed from the original knot trace using our method. Then, by applying our method to the knot trace of the translated knot, we can obtain another PALF $P'$ whose regular fiber $F'$ is isotopic to $F$.

\begin{figure}[htbp]
\centering  
\begin{tikzpicture}
    \node[anchor=south west, inner sep=0] (image) at (0,0)  {\includegraphics[scale=0.75]{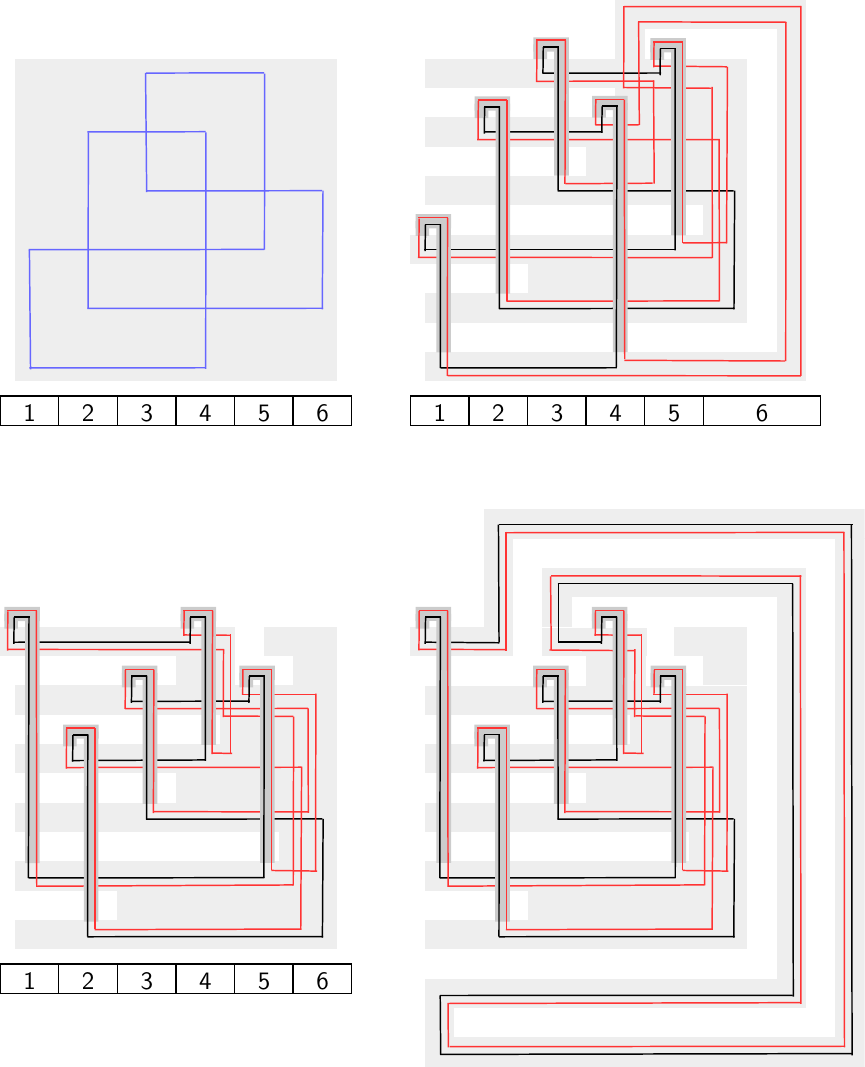}};
    \begin{scope}[x={(image.south east)},y={(image.north west)}]
        \node [font=\scriptsize] at (0.01, 0.74) {$B_{0}$};
        \node [font=\scriptsize] at (0.01, 0.315) {$C_{0}$};
        \node [font=\scriptsize] at (0.485, 0.74) {$C_{0}$};
        \node [font=\scriptsize] at (0.485, 0.315) {$C_{0}$};
        \node at (0.21, 0.565) {(a)};
        \node at (0.7, 0.565) {(b)};
        \node [below] at (0.21, 0.0) {(c)};
        \node [below] at (0.73, 0.0) {(d)};
    \end{scope}
\end{tikzpicture}
\caption{(a) The guide line $B_0$ drawn on the 0-handle $D^2$. (b) and (c) The PALFs with the monodromy factorization $(C_0, C_5, C_4, C_3, C_2, C_1)$. $C_i$ ($1 \leq i \leq 5$) denotes a red simple closed curve passing over the 1-handle in the $i$-th column. (d) An intermediate state during the isotopy.}
\label{fig:X0211}
\end{figure}

\section{Applications to knot traces} \label{sec:application}

In this section, we present two applications of our construction method.

\subsection{Twist knots} \label{subsec:twist}

As an application, we prove as a theorem the existence of a PALF whose regular fiber has genus $1$ and whose total space is diffeomorphic to the knot trace of a Legendrian positive twist knot.
Furthermore, we show that if the deformation of the boundary of the 0-handle is restricted to horizontal isotopies, the genus becomes larger.

The positive twist knot $W_s$ is the knot shown in Figure~\ref{fig:X0301}, where we assume $s \geq 1$.

\begin{figure}[htbp]
\centering  
\begin{tikzpicture}
    \node[anchor=south west, inner sep=0] (image) at (0,0)  {\includegraphics[scale=0.85]{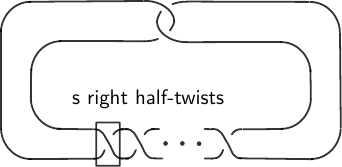}};
    \begin{scope}[x={(image.south east)},y={(image.north west)}]
    \end{scope}
\end{tikzpicture}
\caption{The positive twist knot $W_s$ ($s \geq 1$).}
\label{fig:X0301}
\end{figure}

\begin{thm}\label{thm:twist}
Suppose that a Stein surface is given as a 2-handlebody consisting of a single 0-handle and a single 2-handle attached along a Legendrian knot. If this Legendrian knot is the positive twist knot $W_s$ ($s \geq 1$), then there exists a PALF whose regular fiber has genus $1$ and whose total space is diffeomorphic to the Stein surface.
\end{thm}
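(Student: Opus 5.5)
The plan is to apply the modified construction of Subsection~\ref{subsec:construction1} to an explicit grid diagram of the Legendrian positive twist knot $W_s$. Corollary~\ref{cor:construction1} then identifies the total space with the Stein surface, so the real content is a careful choice of the vertical isotopies of the 0-handle that keeps the genus of the regular fiber equal to $1$.

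\textbf{Step 1: the grid diagram.} We will first put the front projection of $W_s$ into grid position $\widetilde{C_0'}$, chosen uniformly in $s$. The $s$ half-twists (or full twists) of the twist region will appear as a staircase of vertical segments occupying consecutive columns, each crossing a horizontal segment. The clasp will contribute a bounded number of further columns containing the NW corners, i.e.\ the left cusps. By the principle at the end of Subsection~\ref{subsec:comment}, we may vertically translate rows freely, so we choose the row ordering that makes the isotopies in Step 2 simplest.

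\textbf{Step 2: the regular fiber.} Following rules (1)--(3) of Subsection~\ref{subsec:comment}, we lift every vertical segment except the rightmost one over a 1-handle. For the twist region, we push the boundary of the 0-handle vertically (down from above, as in Column~4 of Figure~\ref{fig:X0204}) rather than forming a comb. The aim is that consecutive twist columns attach their 1-handles in a nested or parallel pattern instead of an alternating one.

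We will then read off the cyclic word of attaching spheres along $\partial D^2$, as in item (3), and compute the number of boundary components $r$ of the fiber $F$. With $0$-handle plus $N$ 1-handles, $\chi(F)=1-N$, and the genus is $g=(N+1-r)/2$. The goal is to show $r=N-1$, i.e.\ $g=1$. This amounts to showing that the attaching-sphere word contains exactly one pair of alternating 1-handles, up to handle slides along $\partial D^2$, which may be applied as in Figure~\ref{fig:X0202}. We will verify this by tracing boundary components, and expect each twist column to contribute one new boundary circle by induction on $s$.

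\textbf{Step 3: conclusion.} Once the PALF conditions of Subsection~2.3 of \cite{Tanaka2025construction} are checked for the curves $C_0,\dots,C_N$, Corollary~\ref{cor:construction1} gives the diffeomorphism to the trace with framing $tb-1$.

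\textbf{Main obstacle.} The hardest step is Step 2. We must arrange the vertical isotopies so that they are never blocked by horizontal segments of $B_0$, while keeping the 1-handles from the twist region unlinked along the boundary. The clasp necessarily forces one alternating pair, so genus $1$ is optimal only if all twist handles are non-alternating. I would first try the case $s=1$, which is close to the figure-eight-type example of Figure~\ref{fig:X0211} and the trefoil example of Figure~\ref{fig:X0204}, and then extend inductively by inserting two columns per additional twist, checking that the boundary count increases by exactly the number of added handles.
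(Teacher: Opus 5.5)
Your plan is essentially the paper's proof. It applies the modified construction of Subsection~\ref{subsec:construction1} to a grid diagram of $W_s$ with non-comb (vertical) deformations of the $0$-handle, then gets $g=1$ from the Euler characteristic once the fiber has $N=s+3$ one-handles and $r=s+2=N-1$ boundary components; the paper reads these counts off the drawn cases $s=1,2,3$ and extrapolates, which is exactly what your induction is meant to justify. Drop the heuristic that the twist handles must be pairwise non-alternating: it is neither needed nor literally true, since the trefoil fiber in Figure~\ref{fig:X0203} has several alternating pairs of feet and still genus $1$, and only the count $r=N-1$ matters.
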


\begin{figure}[htbp]
\centering  
\begin{tikzpicture}
    \node[anchor=south west, inner sep=0] (image) at (0,0)  {\includegraphics[scale=0.75]{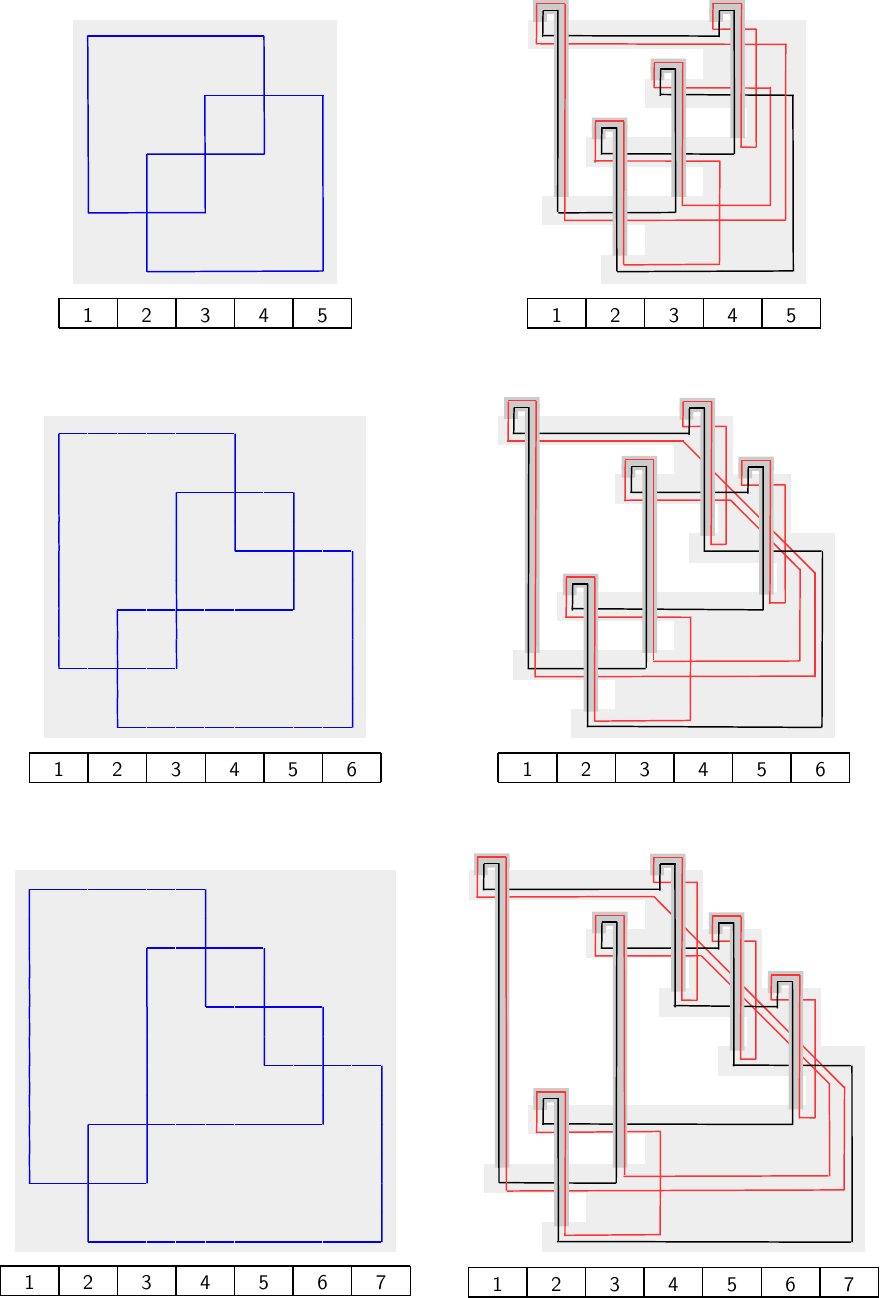}};
    \begin{scope}[x={(image.south east)},y={(image.north west)}]
        \node [font=\scriptsize] at (0.01, 0.25) {$B_0$};
        \node [font=\scriptsize] at (0.547, 0.25) {$C_0$};
        \node [font=\scriptsize] at (0.04, 0.6) {$B_0$};
        \node [font=\scriptsize] at (0.58, 0.6) {$C_0$};
        \node [font=\scriptsize] at (0.07, 0.92) {$B_0$};
        \node [font=\scriptsize] at (0.61, 0.92) {$C_0$};

        \node [below] at (0.233, -0.01) {(e)};
        \node [below] at (0.765, -0.01) {(f)};
        \node  at (0.233, 0.37) {(c)};
        \node  at (0.765, 0.37) {(d)};
        \node  at (0.233, 0.72) {(a)};
        \node  at (0.765, 0.72) {(b)};
    \end{scope}
\end{tikzpicture}
\caption{Regular fibers constructed from the knot traces along the positive twist knots $W_1$, $W_2$, and $W_3$. Monodromy factorizations: (b) $(C_0, C_4, C_3, C_2, C_1)$, (d) $(C_0, C_5, C_4, C_3, C_2, C_1)$, and (f) $(C_0, C_6, C_5, C_4, C_3, C_2, C_1)$. $C_i$ ($1 \leq i \leq 6$) denotes a red simple closed curve passing over the 1-handle in the $i$-th column.}
\label{fig:X0302}
\end{figure}

\begin{proof}
We use the construction method of PALFs presented in Subsection~\ref{subsec:construction1}. 
The cases where the Legendrian knots are the positive twist knots $W_1$, $W_2$, and $W_3$ are shown in Figures~\ref{fig:X0302}(a) and (b), (c) and (d), and (e) and (f), respectively. 
Figures~\ref{fig:X0302}(a), (c), and (e) show the Legendrian knot converted into a knot $\widetilde{C_0}$ in grid position, a copy of which is drawn on the 0-handle $D^2$ to serve as the guide line $B_0$. 
Figures~\ref{fig:X0302}(b), (d), and (f) show the regular fibers and the vanishing cycles of the constructed PALF $P$ in each case, respectively.

For $W_1, W_2$, and $W_3$, the numbers of boundary components are $3, 4$, and $5$, while the numbers of 1-handles are $4, 5$, and $6$, respectively. 
From these constructions, for $W_s$ ($s \geq 1$), the number of boundary components of the regular fiber is $s + 2$, and the number of 1-handles is $s + 3$. 
Using the Euler characteristic formula
\[
2 - 2g - (\text{the number of boundary components}) = 1 - (\text{the number of 1-handles}),
\]
we obtain the genus $g = 1$.
\end{proof}

In Figures~\ref{fig:X0302}(d) and (f), the deformation of the boundary of the 0-handle is not purely horizontal. If we deform the 0-handle into a comb-like domain by an isotopy parallel to each row, then for $W_2$, the regular fiber has $2$ boundary components, $5$ 1-handles, and genus $2$; and for $W_3$, it has $1$ boundary component, $6$ 1-handles, and genus $3$.

\subsection{Torus knots} \label{subsec:torus}

As a preparation for the next section, and as an application, we present the following theorem regarding the knot trace of a Legendrian positive torus knot $T_{2, 2n+1}$.

\begin{thm}\label{thm:torus}
Suppose that a Stein surface is given as a 2-handlebody consisting of a single 0-handle and a single 2-handle attached along a Legendrian knot.
If the Legendrian knot is the positive torus knot $T_{2, 2n+1}$, then there exists a PALF whose total space is diffeomorphic to the Stein surface, and whose regular fiber has genus $1$ and $2n+1$ boundary components.
\end{thm}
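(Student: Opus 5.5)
We follow the pattern of the proof of Theorem~\ref{thm:twist}, using the modified construction of Subsection~\ref{subsec:construction1} together with Corollary~\ref{cor:construction1}. The first step is to fix a standard Legendrian front of $T_{2,2n+1}$ and convert it into a knot $\widetilde{C_0}$ in grid position. We take the usual front with $2n+1$ crossings and two left cusps, so that $tb = 2n-1$. Its grid diagram has a staircase-like pattern whose size grows linearly in $n$. A copy of it is drawn on the 0-handle $D^2$ as the guide line $B_0$.

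Next we apply the construction column by column. A vertical segment is lifted over a 1-handle exactly when one of the rules (1)(a)--(c) of Subsection~\ref{subsec:comment} applies. Where the horizontal segments of $B_0$ do not obstruct it, the boundary of the 0-handle is deformed by vertical isotopies rather than by a comb-like horizontal deformation, respecting rule (2)(a). We carry this out explicitly for $n=1,2,3$, since $n=1$ is the trefoil treated in Figures~\ref{fig:X0203} and \ref{fig:X0204}(d). Then we identify the repeating pattern: each increase of $n$ by one adds two columns to the grid. From this we extract, for general $n$, that the regular fiber has $2n+2$ 1-handles and $2n+1$ boundary components. This is consistent with the trefoil case, which has four 1-handles and (we expect) three boundary components in the modified drawing.

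With these counts, the Euler characteristic formula
\[
2-2g-(2n+1) = 1-(2n+2)
\]
gives $g=1$. Corollary~\ref{cor:construction1} then shows that the total space is diffeomorphic to the knot trace with framing $tb-1$, which is the given Stein surface.

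The main obstacle is the boundary-component count. We must choose the vertical isotopies so that the cyclic sequence of attaching spheres along $\partial D^2$ has the right interleaving. Concretely, the sequence should behave like a chain in which consecutive 1-handles alternate, so that each added pair of columns increases the number of boundary components by exactly two rather than merging them. We would first try pushing the boundary down from above in every column to the right of the first twist region, as in Column~4 of Figure~\ref{fig:X0204}(b). We would then verify by tracing $\partial F$ that the pattern stabilizes, i.e.\ that passing from $n$ to $n+1$ inserts a fixed block into the cyclic sequence. The principle on vertical translations of Subsection~\ref{subsec:comment} lets us normalize the grid position so that this inductive block is visible.
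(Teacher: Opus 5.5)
Your proposal takes the same approach as the paper. The paper carries out the modified construction for $T_{2,3}$ (Figure~\ref{fig:X0204}(d)) and $T_{2,5}$ (Figure~\ref{fig:X0303}(b)), counts $4,6$ 1-handles and $3,5$ boundary components, extrapolates to $2n+2$ and $2n+1$, gets $g=1$ from the Euler characteristic formula, and takes the diffeomorphism from Corollary~\ref{cor:construction1}. Your planned check that passing from $n$ to $n+1$ inserts a fixed block into the cyclic word would go beyond the paper, which only extrapolates from two figures.

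One correction for that check: what you need is that both attaching spheres of each new 1-handle lie on the same boundary component of the surface built so far, so that it splits a component rather than merging two. This is not the same as consecutive 1-handles alternating along $\partial D^2$ in the sense of item~(3) of Subsection~\ref{subsec:comment}. A pure chain of pairwise-alternating 1-handles has genus growing with its length, which is the opposite of what you want.
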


\begin{figure}[htbp]
\centering  
\begin{tikzpicture}
    \node[anchor=south west, inner sep=0] (image) at (0,0)  {\includegraphics[scale=0.7]{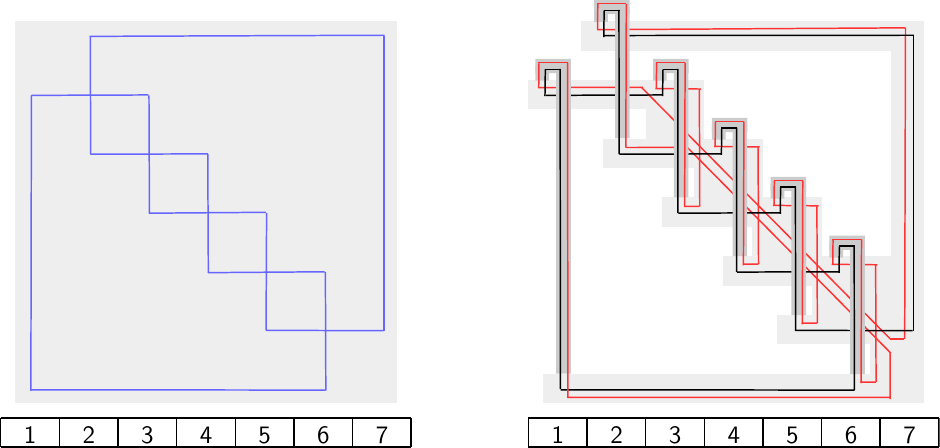}};
    \begin{scope}[x={(image.south east)},y={(image.north west)}]
        \node [font=\scriptsize] at (0, 0.572) {$B_0$};
        \node [font=\scriptsize] at (0.568, 0.572) {$C_0$};

        \node [below] at (0.224, -0.03) {(a)};
        \node [below] at (0.783, -0.03) {(b)};

    \end{scope}
\end{tikzpicture}
\caption{The regular fiber constructed from the knot trace along the positive torus knot $T_{2,5}$. Monodromy factorization: $(C_0, C_6, C_5, C_4, C_3, C_2, C_1)$. $C_i$ ($1 \leq i \leq 6$) denotes a red simple closed curve passing over the 1-handle in the $i$-th column.}
\label{fig:X0303}
\end{figure}

\begin{proof}
The case where the Legendrian knot is the positive torus knot $T_{2,3}$ is shown in Figures~\ref{fig:X0204}(a) and (d). 
The case where the Legendrian knot is the positive torus knot $T_{2,5}$ is shown in Figures~\ref{fig:X0303}(a) and (b). 
Figure~\ref{fig:X0303}(a) shows the Legendrian knot converted to the grid-position knot $\widetilde{C_0}$, a copy of which is drawn on the 0-handle $D^2$ to serve as the guide line $B_0$. 
Figure~\ref{fig:X0303}(b) shows the regular fiber and the vanishing cycles of the PALF $P$ constructed via the method in Subsection~\ref{subsec:construction1}.

For $T_{2,3}$ and $T_{2,5}$, the numbers of boundary components are $3$ and $5$, while the numbers of 1-handles are $4$ and $6$, respectively. 
From these constructions, for $T_{2, 2n+1}$ ($n \geq 1$), the number of boundary components of the regular fiber is $2n+1$, and the number of 1-handles is $2n+2$. 
Using the Euler characteristic formula
\[
2 - 2g - (\text{the number of boundary components}) = 1 - (\text{the number of 1-handles}),
\]
we obtain the genus $g = 1$.
\end{proof}

\section{Comparison with open book decompositions} \label{sec:open_book}

It is well known that the contact structure induced on the boundary of a Stein surface is supported by the open book decomposition induced by the PALF that the Stein surface admits (cf.\ \cite[Subsection~2.1]{MR3609904}). When an open book is induced by a PALF, its page corresponds to the regular fiber of the PALF, and its monodromy can be identified with that of the PALF.

In Subsection~2.1, we constructed a PALF $P$ whose total space is diffeomorphic to the knot trace of the positive torus knot $T_{2,3}$ (see Figure~\ref{fig:X0204}(d)). Our construction process involves a PALF $SF$ whose total space is diffeomorphic to $D^4$ (see Figure~\ref{fig:X0204}(c)). The PALF $P$ is obtained by embedding a knot $\widetilde{C'_0}$, derived from the Legendrian knot $T_{2,3}$, as a vanishing cycle into the regular fiber of $SF$.

On the other hand, Avdek proved that a Legendrian torus knot $K = T_{2, 2n+1}$ in $(S^{3},\xi_{std})$ with Thurston--Bennequin number $tb(K)=2n-1$ is contained in the page $\Sigma$ of an open book $(\Sigma, \Phi)$ supporting $(S^{3},\xi_{std})$ \cite[Theorem~5.2]{MR3071137}. In this construction, the page $\Sigma$ has the topological type of a $(2n+1)$-punctured torus, which corresponds to a surface of genus $1$ with $2n+1$ boundary components (Figure~\ref{fig:X0401}). 

Note that this topological type coincides not only with that of the regular fiber of the PALF $SF$ (and consequently $P$) described in Subsection~2.1 for the case of $T_{2,3}$, but also with the general result for $T_{2, 2n+1}$ presented in Subsection~\ref{subsec:torus}.

\begin{figure}[htbp]
\centering  
\begin{tikzpicture}
    \node[anchor=south west, inner sep=0] (image) at (0,0)  {\includegraphics[scale=0.52]{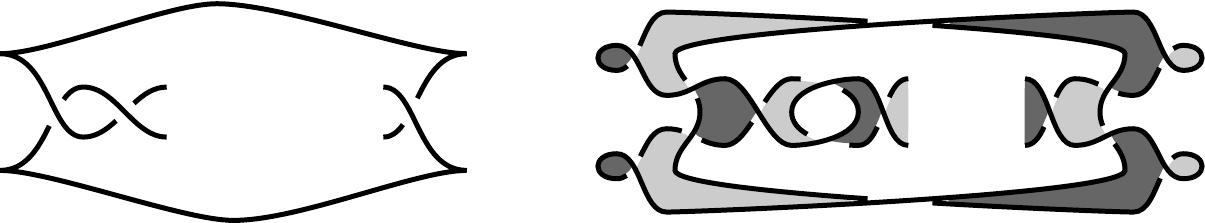}};
    \begin{scope}[x={(image.south east)},y={(image.north west)}]
        \node  at (0.19, -0.19) {$(2n+1)$ positive crossings};
        \node [below] at (0.20, -0.35) {(a)};
        \node [below] at (0.75, -0.35) {(b)};
    \end{scope}
\end{tikzpicture}
\caption{(a) A Legendrian torus knot $T_{2, 2n+1}$. (b) A page of an open book supporting $(S^3, \xi_{std})$ and containing $T_{2, 2n+1}$, constructed using Avdek's Algorithm~2.}
\label{fig:X0401}
\end{figure}

First, we consider the case $n=1$, i.e., the torus knot $T_{2, 3}$ with $tb(K)=1$. In the following, we compare our PALF $SF$ with the open book decomposition $OB$ associated with $T_{2, 3}$.

\vspace{0.5em}
\noindent
\textbf{(1) The page and monodromy of the open book $OB$}

Details for the torus knot $T_{2, 3}$ are provided in \cite[Figure~1, Subsection 1.3]{MR3071137}. We reproduce this figure, with additional colored curves, in Figures~\ref{fig:X0402}(e) and (f). We refer to this open book as $OB$. The page has genus $1$ and $3$ boundary components, which are colored red, blue, and orange. The letters $a$, $b$, $c$, and $d$ correspond to simple closed curves $\gamma_{a}$, $\gamma_{b}$, $\gamma_{c}$, and $\gamma_{d}$ on the surface, which we indicate in magenta, dark green, blue, and red, respectively. The monodromy of the associated open book supporting $(S^{3},\xi_{std})$ is given by $\Phi = D^{+}_{\gamma_{d}} \circ D^{+}_{\gamma_{c}} \circ D^{+}_{\gamma_{b}} \circ D^{+}_{\gamma_{a}}$, where the product is read from right to left (standard functional composition).

\begin{figure}[htbp]
\centering  
\begin{tikzpicture}
    \node[anchor=south west, inner sep=0] (image) at (0,0)  {\includegraphics[scale=0.75]{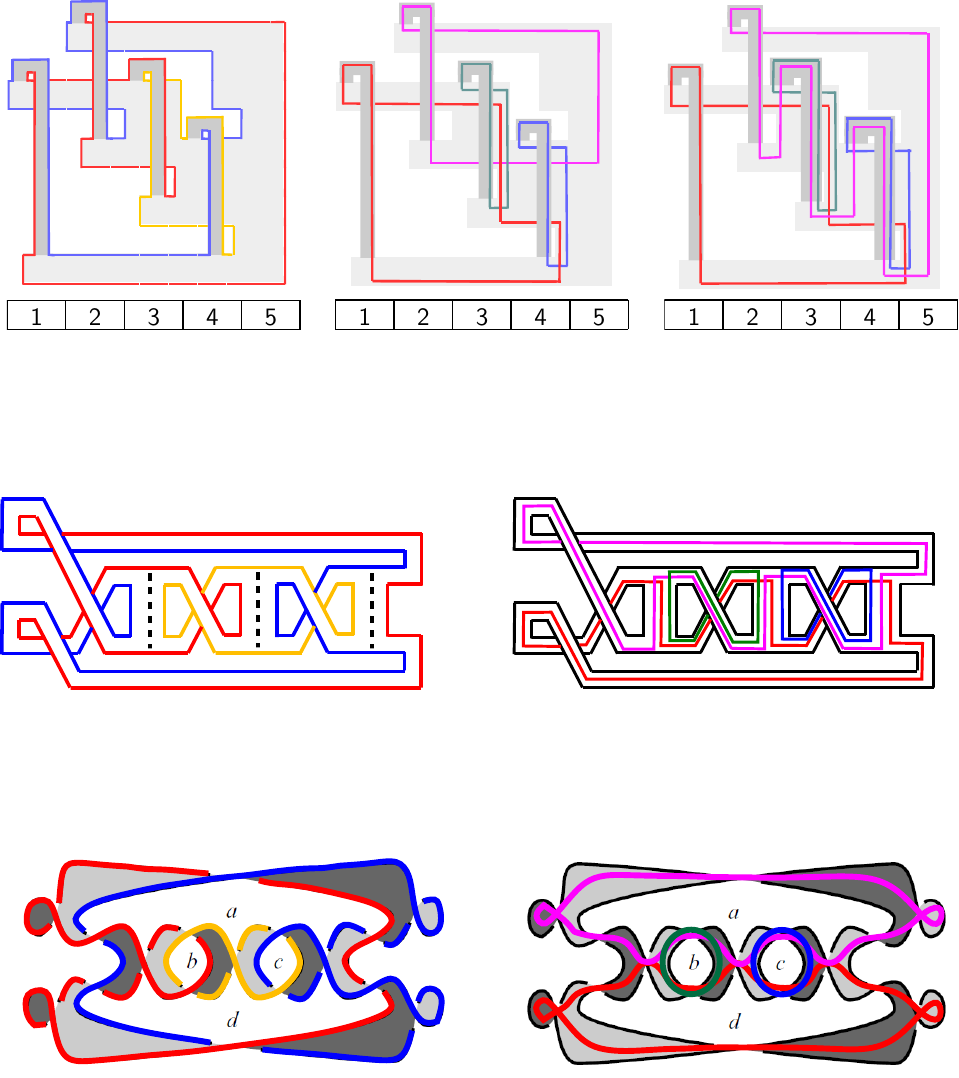}};
    \begin{scope}[x={(image.south east)},y={(image.north west)}]
        \node [below] at (0.233, -0.01) {(f)};
        \node [below] at (0.765, -0.01) {(g)};
        \node  at (0.233, 0.325) {(d)};
        \node  at (0.765, 0.325) {(e)};
        \node  at (0.16, 0.66) {(a)};
        \node  at (0.505, 0.66) {(b)};
        \node  at (0.85, 0.66) {(c)};
    \end{scope}
\end{tikzpicture}
\caption{Comparison between the PALFs ($SF$ and $SF'$) and the open book $OB$ for the torus knot $T_{2,3}$. (a) The regular fiber of the PALF $SF$. (b) The vanishing cycles of $SF$. (c) The vanishing cycles of the PALF $SF'$ obtained by elementary transformations. (d) and (e) The results of rotating (a) and (c) counterclockwise by $45^\circ$, respectively. (f) and (g) The page and monodromy of the open book $OB$. (d) and (e) coincide with (f) and (g), respectively, after performing half-twists along the three black dashed lines depicted in (d).}
\label{fig:X0402}
\end{figure}

\vspace{0.5em}
\noindent
\textbf{(2) The regular fiber and monodromy of the PALF $P$}

Figure~\ref{fig:X0204}(c) shows the regular fiber and monodromy of the PALF $SF$ whose total space is diffeomorphic to $D^4$. (By embedding the vanishing cycle associated with the torus knot $T_{2, 3}$, we obtain the resulting PALF $P$.) The regular fiber of $SF$ has genus $1$ and $3$ boundary components, which we color red, blue, and orange as shown in Figure~\ref{fig:X0402}(a). Rotating Figure~\ref{fig:X0402}(a) counterclockwise by $45^\circ$ yields Figure~\ref{fig:X0402}(d). By performing a half-twist---bringing the lower strand upward through the front and the upper strand downward through the back---on the left side of each of the three black dashed lines, we can see that the configuration coincides with Figure~\ref{fig:X0402}(f). 

The monodromy of $SF$ is represented by a monodromy factorization $(C_4, C_3, C_2, C_1)$ consisting of four vanishing cycles. We indicate these curves in blue, dark green, magenta, and red, respectively (see Figure~\ref{fig:X0402}(b)). Applying elementary transformations twice to $C_2$ yields $C_2' = t^{-1}_{C_4}(t^{-1}_{C_3}(C_2))$, thereby modifying the monodromy factorization to $(C_2', C_4, C_3, C_1)$. Drawing the new vanishing cycle $C_2'$ in the same magenta color as $C_2$ yields Figure~\ref{fig:X0402}(c). Furthermore, since $C_4$ and $C_3$ are disjoint, they commute, and the monodromy factorization becomes $(C_2', C_3, C_4, C_1)$. We denote the PALF associated with this factorization by $SF'$. 

Rotating Figure~\ref{fig:X0402}(c) counterclockwise by $45^\circ$ yields Figure~\ref{fig:X0402}(e). As with Figure~\ref{fig:X0402}(d), performing half-twists at the three locations reveals that the configuration coincides with Figure~\ref{fig:X0402}(g). Since the product in the monodromy factorization $(C_2', C_3, C_4, C_1)$ is read from left to right, it perfectly coincides with the monodromy of the open book $OB$.

From the above, the regular fiber and monodromy of the PALF $SF'$, which is Lefschetz equivalent to the PALF $SF$, coincide with the page and monodromy of the open book $OB$, respectively. 

Next, we consider the general case for $n \geq 1$, namely the positive torus knot $T_{2, 2n+1}$. By Theorem~\ref{thm:torus}, the regular fiber of the constructed PALF has genus $1$ and $2n+1$ boundary components, which exactly matches the topological type of the page in the corresponding open book. Furthermore, by applying the same geometric deformations to the regular fiber and elementary transformations to the monodromy as we did for the $n=1$ case, we can verify that the regular fiber and monodromy of the PALF coincide with the page and monodromy of the associated open book, respectively.

\section{Variations of the construction method} \label{sec:variation}

Exploring variations of our construction method provides valuable insights into the method itself. While our original method lifts the vertical segments of the guide line $B_0$ on the 0-handle $D^2$ over 1-handles starting from the 1st column, there is an alternative method that performs this operation starting from the rightmost column (i.e., the column with the maximum index).

First, recall that we constructed the PALF shown in Figure~\ref{fig:X0203} from the grid-position knot $\widetilde{C_0'}$ shown in Figure~\ref{fig:X0201}(a). Figure~\ref{fig:X0501}(a) reproduces Figure~\ref{fig:X0203}. On the other hand, Figure~\ref{fig:X0501}(b) depicts a PALF constructed from the same guide line $B_0$ on the 0-handle $D^2$ derived from $\widetilde{C_0'}$ in Figure~\ref{fig:X0201}(a), but by performing the operations from the 5th column down to the 2nd column. Note that in the grid-position knot $\widetilde{C_0'}$ in Figure~\ref{fig:X0201}(a), the vertical segments in the 5th and 4th columns have SE (southeast) corners, which correspond to the right cusps of the original Legendrian knot.

Let $P$ be the PALF constructed by applying the original method, which starts operations from the 1st column, to the guide line $B_0$ placed on the 0-handle $D^2$. Let $\hat{P}$ be the PALF constructed by applying the alternative method, which starts operations from the column with the maximum index, to the guide line $\widehat{B_0}$ placed on the 0-handle $D^2$, where $\widehat{B_0}$ is obtained by rotating $B_0$ by $180^\circ$. It can be seen that we can construct a PALF $\hat{P}$ whose regular fiber perfectly coincides with the $180^\circ$ rotated regular fiber of $P$.

Furthermore, while our original method lifts the vertical segments of the guide line $B_0$ over 1-handles above the 0-handle $D^2$, there are also construction methods that push the horizontal segments downward so that they pass behind the 0-handle via 1-handles. Figure~\ref{fig:X0501}(c) shows a PALF constructed from the guide line $B_0$ associated with $\widetilde{C_0'}$ in Figure~\ref{fig:X0201}(a) by performing this push-down operation from the 5th row to the 2nd row. Since the 1-handles pass behind the gray 0-handle, which is deformed into a comb-like domain, portions of the vanishing cycles located behind the 0-handle (e.g., the part of the 1-handle in the 5th row passing behind the teeth of the 0-handle in Columns~2 to 4) are represented by dashed lines to indicate this depth. Figure~\ref{fig:X0501}(d) illustrates a PALF constructed similarly by performing the operations from the 1st row to the 4th row.

It can also be seen that if we apply this push-down construction method, which produces Figure~\ref{fig:X0501}(c) (resp., Figure~\ref{fig:X0501}(d)), to the guide line $B_0$ after applying an isotopy in $S^3$ that rotates it by $180^\circ$ around the downward-sloping diagonal line connecting the top-left and bottom-right corners (resp., the upward-sloping diagonal line connecting the bottom-left and top-right corners) of the 0-handle $D^2$, the regular fiber of the resulting PALF is equivalent to the surface obtained by applying the same isotopy in $S^3$ to the regular fiber of the PALF constructed by the original method.

Although combining these techniques to construct a single surface yields new surfaces, the resulting surface does not necessarily admit a PALF structure. Exploring the effective application of these combined techniques remains an interesting direction for future research.

\begin{figure}[htbp]
\centering  
\begin{tikzpicture}
    \node[anchor=south west, inner sep=0] (image) at (0,0)  {\includegraphics[scale=0.75]{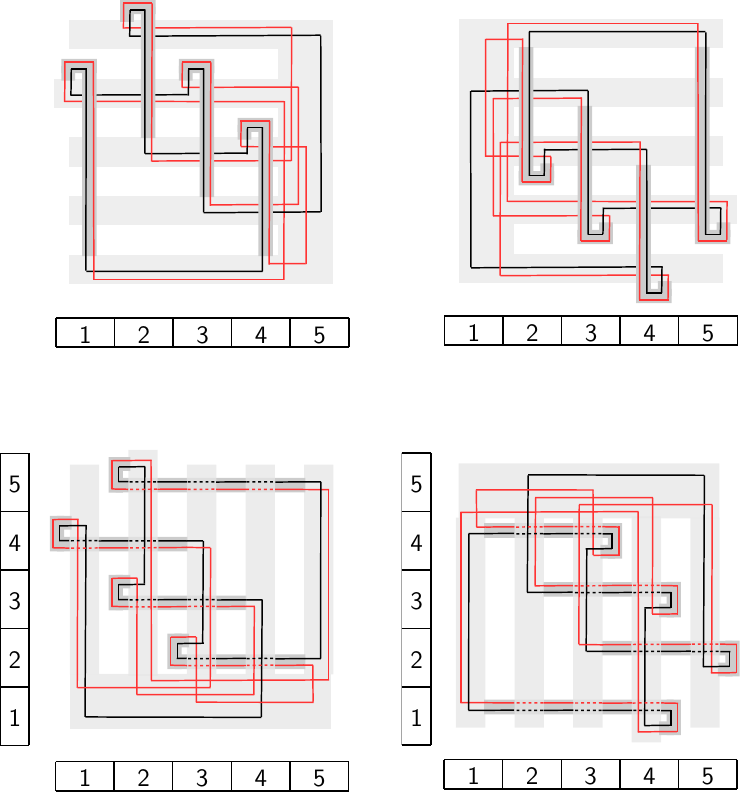}};
    \begin{scope}[x={(image.south east)},y={(image.north west)}]
        \node [font=\scriptsize] at (0.09, 0.77) {$C_{0}$};
        \node [font=\scriptsize] at (0.31, 0.424) {$C_{0}$};
        \node [font=\scriptsize] at (0.61, 0.77) {$C_{0}$};
        \node [font=\scriptsize] at (0.834, 0.424) {$C_{0}$};
        \node at (0.276, 0.52) {(a)};
        \node at (0.8, 0.52) {(b)};
        \node [below] at (0.276, -0.01) {(c)};
        \node [below] at (0.8, -0.01) {(d)};
    \end{scope}
\end{tikzpicture}
\caption{(a) The PALF $P$ with the monodromy factorization $(C_{0}, C_4, C_3, C_2, C_1)$. $C_i$ ($1 \leq i \leq 4$) denotes a red simple closed curve passing over the 1-handle in the $i$-th column. (b) The PALF with the monodromy factorization $(C_{0}, C_2, C_3, C_4, C_5)$. $C_i$ ($2 \leq i \leq 5$) denotes a red simple closed curve passing over the 1-handle in the $i$-th column. (c) The PALF with the monodromy factorization $(C_{0}, C_2, C_3, C_4, C_5)$. $C_i$ ($2 \leq i \leq 5$) denotes a red simple closed curve passing over the 1-handle in the $i$-th row. (d) The PALF with the monodromy factorization $(C_{0}, C_1, C_2, C_3, C_4)$. $C_i$ ($1 \leq i \leq 4$) denotes a red simple closed curve passing over the 1-handle in the $i$-th row. In (c) and (d), portions of the curves passing behind the 0-handle are represented by dashed lines. Note that the monodromy factorizations in (c) and (d) are described with respect to the regular fibers that have been turned over by a $180^\circ$ rotation via an isotopy in $S^3$.}
\label{fig:X0501}
\end{figure}


\bibliographystyle{amsalpha}
\bibliography{ALF_1}

\end{document}